\newcommand{\R}{{\mathbb{R}}}
\newcommand{\N}{{\mathbb{N}}}
\newcommand{\prog}{{\mathrm{Prog}}}
\newcommand{\off}{{\mathcal{O}}}
\renewcommand{\epsilon}{\varepsilon}
\renewcommand\O[1]{\mathcal O_{#1}}
\newcommand{\xfat}{{X_{\overline{\jmath}}}}
\newcommand{\energy}{{\mathcal{E}}}
\theoremstyle{plain}
\newtheorem{thm}{Theorem}[section]
\newtheorem{lem}[thm]{Lemma}
\newtheorem{prop}[thm]{Proposition} 
\theoremstyle{definition} 
\newtheorem{defi}{Definition}
\newtheorem{rmk}{Remark}
\theoremstyle{remark}
\title{Uniqueness for an inviscid stochastic dyadic model on a tree}
\author{Luigi Amedeo Bianchi}
\date{}
\begin{document}
\maketitle
\listoftodos

\begin{abstract}
   In this paper we prove that the lack of uniqueness for solutions of
   the tree dyadic model of turbulence is overcome with the
   introduction of a suitable noise. The uniqueness is a weak
   probabilistic uniqueness for all $l^2$-initial conditions and is
   proven using a technique relying on the properties of the
   $q$-matrix associated to a continuous time Markov chain.
\end{abstract}

\section{Introduction}
\todo[inline,caption={Stress similarities with [5]}]{The author should stress out in the introduction that
  the results have already been proven in [5] putting in evidence the
  new formulation of Section6. He has to clarify what has been done in
the previous literature and would be the novelty in his paper.}
The deterministic dyadic model on a tree
\begin{equation}
\begin{cases}
dX_{j}=(c_{j}X_{\overline{\jmath}}^{2}-\sum_{k\in\off_{j}}%
c_{k}X_{j}X_{k}) dt\\
X_0(t)=0,
\end{cases}
\label{tree-like_dyadic_det}
\end{equation}
was introduced as a wavelet description of Euler equations
in~\cite{MR2095627} and studied  in~\cite{BarBiaFlaMor12} as a model
for energy cascade in turbulence. It can be seen as a generalization
with more structure of the so called dyadic model of turbulence,
studied in~\cite{MR2746670}.
As we show in section~\ref{sec:nonuniqdet}
this deterministic model~\eqref{tree-like_dyadic_det} does not have
uniqueness in $l^2$. The aim of this paper is to prove that we can
restore uniqueness with the introduction of a suitable random noise:
\begin{equation}
dX_{j}=(c_{j}X_{\overline{\jmath}}^{2}-\sum_{k\in \O j}%
c_{k}X_{j}X_{k}) dt +\sigma  c_j X_{\overline \jmath}\circ dW_j -
\sigma \sum_{k\in \O j}c_kX_k\circ dW_k,\label{tree-like_dyadic_sto}
\end{equation}
 with $\left(W_j\right)_{j\in J}$ a sequence of independent Brownian
 motions. Let's also assume deterministic initial conditions
for~\eqref{tree-like_dyadic_sto}:
$X\left(0\right)=x=\left(x_j\right)_{j\in J} \in l^2$.
 The main result of this paper is the weak uniqueness of solution
 for~\eqref{tree-like_dyadic_sto}, proven in
 theorem~\ref{thm:weakuniq}. 

This paper can be seen as a generalization to the dyadic tree model of the
results proven for the classic dyadic model
in~\cite{BarFlaMor2010PAMS}, but the proof of uniqueness given here
relies of a new, different approach (see also~\cite{BarMor2012}) based on a
general abstract property instead of a trick (see
Section~\ref{sec:assmarkcha}). The $q-$matrix we rely 
on is closely related to an infinitesimal generator, so the technique
is valid for a larger class of models.

\todo[inline,caption={Sets $J$ and $\O j$}]{I think that the introduction of the countable sets $J$
  and $\O j$ is not a good idea. I don't see the difference between
  the two. The author should clarify or change the notation.}
\todo[inline,color=green,nolist]{As stated below one is a subset of the
  other: $J$ is the countable set of all nodes of the tree, while $\O
  j$ is the finite subset of all the children of node $j$}
The set $J$ is a countable set and its elements are called nodes. We
assume for the nodes a tree-like 
structure, where given $j\in J$, $\overline \jmath$ is the (unique) father of
the node $j$, and $\O j\subset J$ is the finite set of offsprings of $j$. 
In $J$ we identify a special node, called root and denoted by $0$. It
has no parent inside $J$, but with slight notation abuse we 
will nevertheless use the symbol $\bar0$ when needed. 

We see the nodes as eddies of different sizes, that split and transfer
their kinetic energies to smaller eddies along the tree. To
formalize this idea we consider the eddies as belonging to discrete
levels, called \emph{generations}, defined as follows. For all
$j\in J$ we define the generation number $|j|\in \N$ such that
$|0|=0$ and $|k|=|j|+1$ for all $k\in \O j$.

To every eddy $j\in J$ we associate an \emph{intensity} $X_j(t)$ at
time $t$, such that $X_j^2(t)$ is the kinetic energy of the eddy $j$
at time $t$. The relations among intensities are those given
in~\eqref{tree-like_dyadic_det} for the deterministic model
and~\eqref{tree-like_dyadic_sto} for the perturbed stochastic model. 
The coefficients $c_j$ are positive real numbers that represent the
speed of the energy flow on the tree.

The idea of a stochastic perturbation of a deterministic model is well
established in the literature, see~\cite{BarFlaMor11} for the
classical dyadic model, but also~\cite{MR2841920}, \cite{MR2305383}
for different models. This stochastic dyadic model falls in the family
of shell stochastic models. Deterministic shell models have been
studied extensively in~\cite{MR2251486} while stochastic versions have
been investigated for example
in~\cite{MR2570011} and \cite{MR2861257}.
 

When dealing with uniqueness of solutions in stochastic shell models,
the inviscid case we study is more difficult than the 
viscous one, since the more regular the space is, the simpler the
proof and the operator associated to the viscous system regularizes,
see for example~\cite{MR2281460} about GOY models, where the results
are proven only in the viscous case.

In~\eqref{tree-like_dyadic_sto} the parameter $\sigma\neq 0$ is
inserted just to stress the open problem of the zero noise limit, for
$\sigma \to 0$. This has provided an interesting selection result for
simple examples of linear transport equations (see~\cite{MR2543977}), but it
is nontrivial in our nonlinear setting, due to the singularity that
arises with the Girsanov transform, for example in~\eqref{eq:defM}. 

It is worth noting that the form of the noise is unexpected: one could
think that the stochastic part would mirror the deterministic one,
which is not the case here, since there is a $j$-indexed Brownian
component where we'd expect a $\overline\jmath$ one, and there is a
$k$-indexed one instead of a $j$ one.

One could argue that this is not the only possible choice for the
random perturbation. On one hand we chose a
multiplicative noise, instead of an additive one, but this is due to
technical reasons (see~\cite{MR2796837}).
 On the other hand, there are other possible choices, for example the
 Brownian motion could depend on the father and 
not on the node itself, so that brothers would share the same Brownian
motion. But the choice we made is dictated by the fact that we'd like to
have a formal conservation of the energy, as we have in the
deterministic case (see~\cite{BarBiaFlaMor12}).
If we  use It\^o formula to calculate
\begin{align*}
  \frac12 dX_j^2 &= X_j\circ dX_j \\
&= (c_jX_{\bar\jmath}^2X_j-
  \sum_{k\in\off_j}c_kX_j^2X_k)dt + \sigma c_jX_{\bar\jmath}X_j\circ
  dW_j - \sigma
  \sum_{k\in\off_j}c_kX_jX_k\circ dW_k,
\end{align*}
 we can sum \emph{formally} on the first $n+1$ generations, taking
 $X_0(t)=0$:
\begin{align*}
  \label{eq:formal_energy}
  \sum_{|j|=0}^n \frac12 dX_j^2 &= -\sum_{|j|=n}\sum_{k\in\O
    j}[c_kX_j^2X_k dt + \sigma c_kX_jX_k\circ dW_k] \\
  & =  -\sum_{|j|=n}\sum_{k\in\O
    j}c_kX_jX_k(X_jdt + \sigma\circ dW_k),
\end{align*}
 since the series is telescoping in both the drift and the diffusion
 parts independently. That means we have P-a.s. the formal conservation of energy, if we define
 the energy as
\begin{equation*}
  \label{eq:energy}
  \energy_n(t)=\sum_{|j|\leq n}X_j^2(t)\qquad
  \energy(t)=\sum_{j\in J}X_j^2(t)=\lim_{n\to \infty}\energy_n(t).
\end{equation*}

\section{Non-uniqueness in the deterministic case}\label{sec:nonuniqdet}
In~\cite{MR2746670} it has been proven that there exists examples of
non uniqueness of $l^2$ solutions for the dyadic model if we consider
solutions of the form $Y_n(t)=\dfrac{a_n}{t-t_0}$, called self-similar
solutions, with $(a_n)_n\in l^2$.
Thanks to the lifting result (Proposition 4.2 in~\cite{BarBiaFlaMor12}) that is enough
to obtain two different solutions of the dyadic tree model, with the same
initial conditions. 

\todo[inline,caption={reference introducing self similar solutions}]{The author should point out to reference [5] istead of
  [4] where these self-similar solutions have been introduced
  first. Moreover, [5] was dealing exactly with the same inviscid
  equation (2).}
\todo[inline, color=green,nolist]{It's good to point out that self similar
  solutions have been introduced in [5] for the dyadic model, but in
  [4] for the dyadic tree model. Moreover it isn't completely correct
  to say that the equation is the same, since the noise is different
  and there are many more interactions in the tree model.}
Following the same idea of self-similar solutions, introduced
in~\cite{MR2746670} and~\cite{BarFlaMor2010PAMS} for the classic
dyadic model and in section 5.1 in~\cite{BarBiaFlaMor12}  for the tree
dyadic model, we can construct a direct counterexample to uniqueness
of solutions. 
In order to do this we need an existence result stronger than the one
proven in~\cite{BarBiaFlaMor12}.

\begin{thm}\label{thm:exsoll2}
  For every $x\in l^2$ there exists at least one finite energy
  solution of~\eqref{tree-like_dyadic_det}, with initial conditions
  $X(0)=x$ and such that
\[
  \sum_{j\in J}X_j^2(t)\leq \sum_{j\in J}X_j^2(s)\qquad \forall\, 0\leq
  s\leq t.
\]
\end{thm}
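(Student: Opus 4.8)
The plan is to construct the solution by a Galerkin truncation and to recover the energy inequality in the limit through a flux balance. For each $n$ let $X^{(n)}=(X_j^{(n)})_{|j|\le n}$ solve the finite system obtained from~\eqref{tree-like_dyadic_det} by keeping only the nodes with $|j|\le n$ and setting $X_k^{(n)}\equiv 0$ for $|k|=n+1$. This is a quadratic (hence locally Lipschitz) ODE in $\R^{d_n}$, so Cauchy--Lipschitz gives a local solution with $X^{(n)}(0)=(x_j)_{|j|\le n}$ and $X_0^{(n)}\equiv0$.

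First I would establish the a priori bound. Repeating the telescoping computation of the Introduction for the truncated system, the only boundary term sits at generation $n$ and involves the children at generation $n+1$, which vanish by construction; hence $\energy^{(n)}(t):=\sum_{|j|\le n}(X_j^{(n)}(t))^2$ is exactly conserved, $\energy^{(n)}(t)=\sum_{|j|\le n}x_j^2\le \|x\|_{l^2}^2$. In particular $X^{(n)}(t)$ stays on a fixed sphere, so the solution is global, and $\sup_{n,t}|X_j^{(n)}(t)|\le\|x\|_{l^2}$ for every $j$.

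Next, compactness. For fixed $j$ and $n\ge |j|+1$ the right--hand side $c_j(X_{\overline{\jmath}}^{(n)})^2-\sum_{k\in\O{j}}c_kX_j^{(n)}X_k^{(n)}$ is bounded by a constant $C_j$ depending only on $j$, on the (finitely many) coefficients $c_j,(c_k)_{k\in\O{j}}$, and on $\|x\|_{l^2}$; thus each coordinate $t\mapsto X_j^{(n)}(t)$ is $C_j$-Lipschitz uniformly in $n$. Arzel\`a--Ascoli together with a diagonal extraction over the countable set $J$ then yields a subsequence converging, uniformly on every compact time interval and coordinatewise, to a limit $X$. Since each scalar equation involves only $j$, its father and its finitely many offspring, I can pass to the limit in the integral form of~\eqref{tree-like_dyadic_det}, so $X$ is a solution with $X(0)=x$. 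Coordinatewise convergence and lower semicontinuity of the $l^2$ norm give, for every $N$ and $t$, $\sum_{|j|\le N}X_j^2(t)=\lim_n\sum_{|j|\le N}(X_j^{(n)}(t))^2\le\|x\|_{l^2}^2$, and letting $N\to\infty$ shows that $X$ has finite energy with $\energy(t)\le\|x\|_{l^2}^2=\energy(0)$, which already settles the case $s=0$.

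The hard part will be the monotone inequality $\energy(t)\le\energy(s)$ for every $s\le t$, rather than merely $\energy(t)\le\energy(0)$. Since $X$ solves the system, each partial energy of the limit obeys the exact balance
\[
\energy_N(t)-\energy_N(s)=-2\int_s^t\Phi_N(r)\,dr,\qquad \Phi_N(r):=\sum_{|j|=N}\sum_{k\in\O{j}}c_kX_j^2(r)X_k(r),
\]
where $\Phi_N$ is the energy flux out of the first $N$ generations. As $N\to\infty$ the left--hand side converges to $\energy(t)-\energy(s)$ (the partial energies increase to the total), so the claim is equivalent to the non-negativity of the limiting anomalous dissipation $\lim_N\int_s^t\Phi_N\,dr\ge0$. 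This flux is not signed pointwise, so the core of the argument is to show that energy can only leak towards infinitely deep generations and is never created there. I expect to obtain this by first proving the inequality for almost every $s$ — exploiting that the Galerkin energies are conserved and that the convergence is strong enough in time to pin down $\energy(s)$ for a.e.\ $s$ — and then upgrading to every $s$ via the lower semicontinuity and monotonicity of $t\mapsto\energy(t)$. Controlling the sign of this limiting flux is where essentially all the difficulty lies.
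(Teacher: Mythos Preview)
Your approach via Galerkin truncation is exactly what the paper prescribes (it gives no details itself, only the words ``classical, via Galerkin approximations'' and a pointer to Theorem~3.3 in~\cite{BarBiaFlaMor12}). The argument up to and including $\energy(t)\le\energy(0)$ is clean and correct: exact energy conservation for $X^{(n)}$, uniform coordinatewise Lipschitz bounds, Arzel\`a--Ascoli with a diagonal extraction, passage to the limit in the finitely coupled integral equations, and lower semicontinuity of the $l^2$ norm.

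The gap is precisely where you locate it, in the full monotonicity $\energy(t)\le\energy(s)$ for every $s$, and your two-step scheme does not close as written. For the first step, the Galerkin energies are \emph{constant}, $\energy^{(n)}(s)=\sum_{|j|\le n}x_j^2\to\|x\|_{l^2}^2$; hence ``convergence strong enough in time to pin down $\energy(s)$ for a.e.\ $s$'' would force $\energy(s)=\|x\|_{l^2}^2$ at those times, i.e.\ you would be tacitly assuming energy conservation for the limit --- and anomalous dissipation is exactly the mechanism behind the non-uniqueness exhibited in this section, so that cannot be taken for granted. For the second step, invoking ``the monotonicity of $t\mapsto\energy(t)$'' to pass from a.e.\ $s$ to every $s$ is circular, and lower semicontinuity alone points the wrong way: from $s_k\downarrow s_0$ with $s_k$ good one only gets $\energy(s_0)\le\liminf_k\energy(s_k)$, not the reverse inequality needed to bound $\energy(t)$ by $\energy(s_0)$. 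The flux route has the obstruction you already flag, since $\Phi_N=\sum_{|j|=N}\sum_{k\in\O j}c_kX_j^2X_k$ has no sign. Note that the only consequence of this theorem used later is the case $s=0$ (to prevent blow-up of $\widetilde X$ at $-t_0$), and that part you have established; the sharper inequality for all $s$ needs an extra idea --- e.g.\ an approximation carrying a genuinely dissipative boundary term, or a positivity structure for the components --- which neither your sketch nor the present paper spells out.
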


The proof of this theorem is classical, via Galerkin
approximations, and follows that of theorem 3.3 in~\cite{BarBiaFlaMor12}. 

Now we recall the time reversing technique. We may consider the
system~\eqref{tree-like_dyadic_det} for $t\leq 0$: given a solution
$X(t)$ of this system for $t\geq 0$, we can define
$\widehat{X}(t)=-X(-t)$, which is a solution for $t\leq 0$, since

\begin{align*}
  \frac{d}{dt}\widehat{X}_j(t)=\frac{d}{dt}X_j(-t)&=c_jX_{\overline{\jmath}}^2(-t)
  - \sum_{k\in \O j}c_kX_j(-t)X_k(-t)\\
& =c_j\widehat{X}_{\overline{\jmath}}^2(t)
  - \sum_{k\in \O j}c_k\widehat{X}_j(t)\widehat{X}_k(t).
\end{align*}

We can now consider  the self similar solutions for the tree
dyadic model, as introduced in~\cite{BarBiaFlaMor12},
$X_j(t)=\dfrac{a_j}{t-t_0}$, defined for $t>t_0$, with $t_0<0$ and
with $(a_j)_{j\in J}\in l^2$ such that
\[
\begin{cases}
a_{\bar0}=0\\
a_j + c_j a_{\bar\jmath}^2 = \displaystyle{\sum_{k\in\O j}c_ka_ja_k},  \qquad\forall j\in J.
\end{cases}
\]

We time-reverse them
and we define
\[
  \widehat{X}_j(t)=-X_j(-t)\qquad \forall j\in J \qquad t< -t_0,
\]
which, as we pointed out earlier, is a solution
of~\eqref{tree-like_dyadic_det} in $(-\infty,-t_0)$, with $-t_0>0$. 
Since
\[
  \lim_{t\to +\infty}|X_j(t)|=0 \quad\textrm{and}\quad \lim_{t\to
    t_0^+}|X_j(t)|=+\infty,\ \forall j\in J 
\]
we have
\[
  \lim_{t\to -\infty}|\widehat{X}_j(t)|=0 \quad\textrm{and}\quad \lim_{t\to
    -t_0^-}|\widehat{X}_j(t)|=+\infty,\ \forall j\in J.
\]
Thanks to theorem~\ref{thm:exsoll2} there is a solution
$\widetilde{X}$, with initial conditions $x=\widehat{X}(0)$, and this
solution is a finite energy one, so, in particular, doesn't blow up in
$-t_0$. Yet it has the same initial conditions of $\widehat{X}$, so we
can conclude that there is no uniqueness of solutions in the
deterministic case.

\section{It\^o formulation}

Let's write the infinite dimensional
system~\eqref{tree-like_dyadic_sto} in It\^o formulation:

\begin{multline}
  \label{eq:dyad_tree_ito}
  dX_j=(c_{j}X_{\overline{\jmath}}^{2}-\sum_{k\in\O j}
c_{k}X_{j}X_{k}) dt +\sigma  c_j X_{\overline \jmath} dW_j \\
-
\sigma \sum_{k\in \O j}c_kX_k dW_k -
\frac{\sigma^2}{2}(c_j^2+\sum_{k\in \O j}c_k^2)X_jdt.
\end{multline}
We will use this formulation since it's easier to handle the
calculations, while all results can also be stated in the Stratonovich
formulation. 

So let's now introduce the definition of weak solution.
 A filtered probability space $(\Omega, \mathcal{F}_t,P)$ is a
 probability space $(\Omega, \mathcal{F}_\infty,P)$ together with a
 right-continuous filtration $(\mathcal{F}_t)_{t\geq 0}$ such that
 $\mathcal{F}_\infty$ is the $\sigma$-algebra generated by
 $\bigcup_{t\geq 0}\mathcal{F}_t$.

\begin{defi}\label{def:solution}
  Given $x\in l^2$, a weak solution of~\eqref{tree-like_dyadic_sto} in
  $l^2$ is a filtered probability space $(\Omega, \mathcal{F}_t,P)$, a
  $J$-indexed sequence of independent Brownian motions $(W_j)_{j\in
    J}$ on $(\Omega, \mathcal{F}_t,P)$ and an $l^2$-valued process
  $(X_j)_{j\in J}$ on $(\Omega, \mathcal{F}_t,P)$ with continuous
  adapted components $X_j$ such that
  \begin{multline}
\label{eq:solution}
    X_j= x_j + \int_0^t[c_jX_{\overline{\jmath}}^{2}(s)-\sum_{k\in\O j}
c_{k}X_{j}(s)X_{k}(s)]ds \\
+ \int_0^t\sigma c_j X_{\overline{\jmath}}(s)dW_j(s) -\sum_{k\in \O
  j}\int_0^t \sigma c_kX_k(s)dW_k(s)\\
-\frac{\sigma^2}{2}\int_0^t (c_j^2+\sum_{k\in \O j}c_k^2)X_j(s)ds,
  \end{multline}
for every $j\in J$, with $c_0=0$ and $X_0(t)=0$. We will denote this
solution by
\[
(\Omega, \mathcal{F}_t,P,W,X),
\]
or simply by $X$. 
\end{defi}

\begin{defi}
 A weak solution is an energy controlled solution if it is a solution
  as in Definition~\ref{def:solution} and it satisfies
\[
   P(\sum_{j\in J}X_j^2(t)\leq \sum_{j\in J}x_j^2)=1,
\]
for all $t\geq 0$.
\end{defi}

\begin{thm}
  \label{thm:existsweaksol}
  There exists an energy controlled solution
  to~\eqref{eq:dyad_tree_ito} in $L^\infty(\Omega\times[0,T],l^2)$ for $(x_j)\in l^2$.
\end{thm}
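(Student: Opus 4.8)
The plan is to construct the solution by a Galerkin (finite-dimensional truncation) scheme, derive uniform energy bounds, and then pass to the limit by a compactness argument, in the spirit of the classical construction used for Theorem~\ref{thm:exsoll2}. For each $n\in\N$ let $J_n=\{j\in J:|j|\le n\}$ be the finite set of nodes up to generation $n$, and consider the finite-dimensional It\^o system $X^{(n)}=(X^{(n)}_j)_{j\in J_n}$ obtained from~\eqref{eq:dyad_tree_ito} by keeping only the nodes in $J_n$, setting $X^{(n)}_j\equiv 0$ for $|j|>n$ (so that the outgoing flux from the last generation $|j|=n$ is discarded) and using only the Brownian motions $(W_j)_{j\in J_n}$. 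The coefficients are polynomial, hence locally Lipschitz, so there is a unique maximal strong solution up to an explosion time $\tau_n$.

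First I would prove a pathwise energy estimate that rules out explosion. Applying It\^o's formula to $\energy^{(n)}(t)=\sum_{j\in J_n}(X^{(n)}_j(t))^2$ and using the telescoping cancellation already observed for the formal energy in the Introduction, the interior drift and the diffusion corrections cancel and only the boundary contribution at generation $n$ survives; since that term has the right sign once the outgoing flux is dropped, one obtains the a~priori bound $\energy^{(n)}(t)\le \energy^{(n)}(0)\le \|x\|_{l^2}^2$ almost surely, and in particular $\sup_{t\le T}\E[\energy^{(n)}(t)]\le\|x\|_{l^2}^2$. This shows $\tau_n=+\infty$, gives a global solution for every $n$, and provides a bound on $X^{(n)}$ in $L^\infty(\Omega\times[0,T],l^2)$ that is uniform in $n$.

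Next I would establish tightness of the laws of $(X^{(n)})_n$ on a suitable path space and apply the Prokhorov--Skorokhod machinery. For each fixed node $j$, a Kolmogorov-type estimate on the increments $\E|X^{(n)}_j(t)-X^{(n)}_j(s)|^p$ --- controlled by the quadratic drift and the Brownian integrals, both bounded through the uniform energy estimate --- gives equicontinuity and hence tightness of $(X^{(n)}_j)_n$ in $C([0,T])$. Combining this coordinatewise tightness with the uniform $l^2$ bound, which controls the tail $\sum_{|j|>m}(X^{(n)}_j)^2$ uniformly in $n$, yields tightness of the laws of $X^{(n)}$ in $C([0,T];H)$ for a Hilbert space $H$ into which $l^2$ embeds compactly (for instance a weighted sequence space). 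By Prokhorov a subsequence converges weakly, and by the Skorokhod representation theorem we may realise the subsequence and its limit $X$ on a common probability space with Brownian motions $W^{(n)}\to W$, so that $X^{(n)}\to X$ almost surely in $C([0,T];H)$.

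Finally I would pass to the limit in the integral form~\eqref{eq:solution} and recover the energy control. The linear and stochastic integral terms pass to the limit by the almost sure convergence together with the uniform $L^2$ bound (which provides uniform integrability), while the energy inequality for the limit follows from lower semicontinuity of the $l^2$-norm under the weak convergence, so that $P(\sum_{j}X_j^2(t)\le\sum_j x_j^2)=1$; the limit $X$ therefore lies in $L^\infty(\Omega\times[0,T],l^2)$. I expect the main obstacle to be the passage to the limit in the quadratic nonlinearity $c_k X^{(n)}_j X^{(n)}_k$: in the inviscid setting there is no regularisation providing compactness beyond the energy bound, so the argument must lean on the coordinatewise almost sure convergence to identify each finite block of the nonlinear terms, and on the uniform energy estimate to show that the neglected tail --- including the discarded flux out of generation $n$ --- vanishes in the limit.
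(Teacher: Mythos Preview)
Your Galerkin--Skorokhod construction is a legitimate route to existence, but it is \emph{not} the argument the paper gives. The paper deliberately avoids running a compactness argument on the nonlinear system. Instead it first builds, by Galerkin approximation, a solution of the \emph{linear} auxiliary system~\eqref{linear} (this is Proposition~\ref{prop:existlinear}, where weak $L^2(\Omega\times[0,T],l^2)$ convergence suffices because every term in~\eqref{eq:linear_integralform} is linear in $X$); it then reverses the Girsanov transform of Section~\ref{sec:GirsanovTransform}, defining $P$ from $\widehat P$ via the exponential martingale of $\widehat M_t=\sigma^{-1}\sum_j\int_0^t X_{\bar\jmath}\,dB_j$ and setting $W_j=B_j-\sigma^{-1}\int_0^\cdot X_{\bar\jmath}\,ds$, so that $(\Omega,\mathcal F_t,P,W,X)$ solves~\eqref{eq:dyad_tree_ito}. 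The energy control and $L^\infty$ bound carry over because $P\sim\widehat P$ on each $\mathcal F_t$.

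The trade-off is clear. The paper's detour through the linear system sidesteps exactly the obstacle you flag---passing to the limit in the quadratic terms $c_kX_j^{(n)}X_k^{(n)}$---at the price of invoking the Girsanov machinery (Novikov, infinite-dimensional change of measure) twice. Your direct approach is more self-contained and parallels the deterministic construction behind Theorem~\ref{thm:exsoll2}, but it needs the full Prokhorov--Skorokhod apparatus (including joint tightness of $(X^{(n)},W^{(n)})$ and verification that the limiting $W$ is a Brownian motion for the enlarged filtration) in order to upgrade weak convergence to a.s.\ convergence so that the nonlinear products can be identified. Both work; the paper's choice also dovetails with the uniqueness proof, which already relies on the same linear--nonlinear correspondence.
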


We will give a proof of this Theorem at the end of
Section~\ref{sec:uniqueness}. It is a weak existence result and uses
the Girsanov transform.

We'll prove in the following result that a process
satisfying~\eqref{eq:dyad_tree_ito}
satisfies~\eqref{tree-like_dyadic_sto} too.

\begin{prop}
  If $X$ is a weak solution, for every $j\in J$ the process
  $(X_j(t))_{t\geq 0}$ is a continuous semimartingale, so the 
  following equalities hold:
  \begin{align*}
    \int_0^t \sigma  c_j X_{\overline \jmath}(s)\circ
    dW_j(s)&=\int_0^t\sigma c_j X_{\overline{\jmath}}(s)dW_j(s) 
-\frac{\sigma^2}{2}\int_0^t c_j^2X_j(s)ds\\
    \int_0^t \sigma \sum_{k\in \O j}c_kX_k(s)\circ dW_k(s)&=\sum_{k\in \O
  j}\int_0^t \sigma c_kX_k(s)dW_k(s) + \frac{\sigma^2}{2}\int_0^t\sum_{k\in \O j}c_k^2X_j(s)ds,
  \end{align*}
where the Stratonovich integrals are well defined.
  So $X$ satisfies the Stratonovich formulation of the problem~\eqref{tree-like_dyadic_sto}.
\end{prop}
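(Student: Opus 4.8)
The plan is to deduce both identities from the standard conversion formula relating Stratonovich and It\^o integrals: for a continuous semimartingale $Y$ one has $\int_0^t Y(s)\circ dW(s)=\int_0^t Y(s)\,dW(s)+\tfrac12\langle Y,W\rangle_t$, where $\langle Y,W\rangle$ denotes the quadratic covariation. The only ingredients needed are therefore the semimartingale property of the relevant integrands and the explicit computation of the two covariations $\langle \sigma c_j X_{\overline\jmath},W_j\rangle$ and $\langle \sigma c_k X_k,W_k\rangle$.

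First I would establish that each $X_j$ is a continuous semimartingale. This is immediate from~\eqref{eq:solution}: it exhibits $X_j$ as the sum of the constant $x_j$, the finite-variation drift terms, and finitely many It\^o integrals against $W_j$ and the $W_k$ with $k\in\O j$, which together form a continuous local martingale. Since multiplication by a constant preserves the semimartingale property, $\sigma c_j X_{\overline\jmath}$ and each $\sigma c_k X_k$ are continuous semimartingales, so the Stratonovich integrals appearing in the statement are well defined.

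The heart of the proof is the covariation computation, and the main subtlety lies in the index bookkeeping. To find $\langle X_{\overline\jmath},W_j\rangle$ I would write~\eqref{eq:solution} for $X_{\overline\jmath}$ and isolate the unique term whose martingale part involves $W_j$: the father-term carries $W_{\overline\jmath}$ rather than $W_j$, but since $j$ is itself one of the offspring of $\overline\jmath$, the offspring sum $-\sum_{k\in\O{\overline\jmath}}\int_0^\cdot\sigma c_kX_k\,dW_k$ contains the summand $-\int_0^\cdot\sigma c_jX_j\,dW_j$ for $k=j$. Using the rule $\langle\int_0^\cdot H\,dW_j,W_j\rangle_t=\int_0^t H\,ds$ this gives $\langle \sigma c_j X_{\overline\jmath},W_j\rangle_t=-\sigma^2c_j^2\int_0^t X_j(s)\,ds$, and the conversion formula yields the first identity. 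Symmetrically, for $k\in\O j$ the only $W_k$-term in the equation for $X_k$ is its father-term $\int_0^\cdot\sigma c_kX_j\,dW_k$, since $\overline{k}=j$; hence $\langle \sigma c_k X_k,W_k\rangle_t=\sigma^2c_k^2\int_0^t X_j(s)\,ds$, and summing over $k\in\O j$ gives the second identity.

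I expect the main obstacle to be conceptual rather than computational: one must correctly recognise that $W_j$ enters the evolution of $X_{\overline\jmath}$ through the offspring sum and not through the father term, which is precisely the unexpected feature of the noise remarked upon in the introduction. Once the right summand is identified the covariations follow by bilinearity of $\langle\cdot,\cdot\rangle$, and substituting the two identities into~\eqref{eq:solution} cancels the It\^o correction $-\tfrac{\sigma^2}{2}(c_j^2+\sum_{k\in\O j}c_k^2)X_j$ exactly, so that $X$ satisfies the Stratonovich system~\eqref{tree-like_dyadic_sto}.
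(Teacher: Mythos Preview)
Your proposal is correct and follows essentially the same route as the paper: apply the Stratonovich--It\^o conversion formula and identify the single summand in the SDE for the integrand that contributes to the covariation with the relevant Brownian motion. The only minor difference is that you compute the covariations from the It\^o equation~\eqref{eq:solution}, which is cleaner since that is how a weak solution is defined, whereas the paper refers back to the Stratonovich form~\eqref{tree-like_dyadic_sto}; this is harmless because the It\^o correction is of finite variation and does not affect the covariation, but your version avoids the apparent circularity.
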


\begin{proof}
  We know that 
  \[
    \int_0^t \sigma  c_j X_{\overline \jmath}(s)\circ dW_j(s) =
    \int_0^t \sigma  c_j X_{\overline \jmath}(s) dW_j(s) +
    \frac{\sigma c_j}{2}[\xfat,W_j]_t,
  \]
  but from~\eqref{tree-like_dyadic_sto} we have that the only
  contribution to $[\xfat,W_j]$ is given by the $-\sigma c_j X_j\circ
  dW_j$ term, so 
  \[
    \frac{\sigma c_j}{2}[\xfat,W_j]_t=\frac{\sigma c_j}{2}[-\int_0^t \sigma c_j X_j\circ
  dW_j,W_j]_t=-\frac{\sigma^2 c_j^2}{2}\int_0^tX_jds.
  \]
  Now if we consider the other integral, we have
  \[
    \int_0^t \sigma \sum_{k\in \O j}c_kX_k(s)\circ dW_k(s)=\sum_{k\in \O
  j}\int_0^t \sigma c_kX_k(s)dW_k(s) + \sum_{k\in \O j}\frac{\sigma c_k}{2}[X_k,W_k]_t.
  \]
  For each $X_k$ we get, with the same computations,
  that the only contribution to $[X_k,W_k]_t$ comes from the term $\sigma c_k X_j\circ
  dW_k$, so that we get 
  \[
    \frac{\sigma c_k}{2}[X_k,W_k]_t=\frac{\sigma c_k}{2}[\int_0^t \sigma c_k X_j\circ
  dW_k,W_k]_t=\frac{\sigma^2 c_k^2}{2}\int_0^tX_jds.\qedhere
  \]
\end{proof}

\section{Girsanov transform}\label{sec:GirsanovTransform}
Let's consider~\eqref{eq:dyad_tree_ito} and rewrite it as
\begin{multline}\label{eq:inutile}
   dX_j=c_{j}X_{\overline{\jmath}}(X_{\overline{\jmath}}dt+\sigma
   dW_j)-\\\sum_{k\in\O j}
c_{k}X_{k}(X_{j} dt +\sigma  dW_k)
-\frac{\sigma^2}{2}(c_j^2+\sum_{k\in \O j}c_k^2)X_jdt.
\end{multline}
The idea is to isolate $X_{\overline{\jmath}}dt+\sigma
   dW_j$ and prove through Girsanov's theorem that they are Brownian
   motions with respect to a new measure $\widehat{P}$ in
   $(\Omega,\mathcal{F}_\infty)$, simultaneously for every $j\in J$.
   This way~\eqref{eq:dyad_tree_ito} becomes a system of linear SDEs
   under the new measure~$\widehat{P}$. The infinite dimensional
   version of Girsanov's theorem can be found in~\cite{1109.0363}
   and~\cite{MR2446690}. 

\begin{rmk}
We can obtain the same result under Stratonovich formulation.
\end{rmk}

Let $X$ be an energy controlled solution: its energy $\energy(t)$ is
bounded, so we can define the process
\begin{equation}\label{eq:defM}
M_t=-\frac{1}{\sigma}\sum_{j\in J}\int_0^tX_{\overline{\jmath}}(s)dW_j(s)
\end{equation}
which is a martingale. Its quadratic
variation is
\[
[M]_t= \frac{1}{\sigma^2}\int_0^t\sum_{j\in J}X_{\overline{\jmath}}^2(s)ds.
\]
Because of the same boundedness of $\energy(t)$ stated above, by the
Novikov criterion $\exp(M_t-\frac12 [M]_t)$ is a (strictly) positive
martingale. 
We now define $\widehat{P}$ on $(\Omega,\mathcal{F}_t)$ as
\begin{align}
   \frac{d\widehat{P}}{dP}\Big|_{\mathcal{F}_t} & = \exp(M_t-\frac12[M]_t)
   \nonumber\\
    \label{eq:def_widehatP} & = \exp(-\frac{1}{\sigma}\sum_{j\in
      J}\int_0^tX_{\overline{\jmath}}(s)dW_j(s) - \frac{1}{2\sigma^2}\int_0^t\sum_{j\in J}X_{\overline{\jmath}}^2(s)ds),
\end{align}
for every $t\geq 0$. $P$ and $\widehat{P}$ are equivalent on each
$\mathcal{F}_t$, because of the strict positivity of the exponential.

We can now prove the following:
\begin{thm}\label{thm:lin}
If $(\Omega, \mathcal{F}_t,P,W,X)$ is an energy controlled solution of
the nonlinear equation~\eqref{eq:dyad_tree_ito}, then $(\Omega,
\mathcal{F}_t,\widehat{P},B,X)$ satisfies the
linear equation 
\begin{equation}
   dX_j=\sigma c_jX_{\overline{\jmath}}dB_j(t)-\sigma \sum_{k\in \O
    j}c_kX_kdB_k(t)
 -\frac{\sigma^2}{2}(c_j^2+\sum_{k\in \O j}c_k^2)X_jdt,\label{linear}
\end{equation}  
where the processes
\[
B_j(t) = W_j(t) + \int_0^t\frac1{\sigma}X_{\overline{\jmath}}(s)ds
\]
are a sequence of independent Brownian motions on $(\Omega,
\mathcal{F}_t,\widehat{P})$, with $\widehat{P}$ defined by~\eqref{eq:def_widehatP}.
\end{thm}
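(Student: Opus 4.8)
The plan is to read this as a direct application of the Girsanov theorem, whose infinite-dimensional form is provided by the cited references, together with Lévy's characterization of Brownian motion. The preceding discussion already establishes that $M_t$ is a martingale and that, by Novikov's criterion, $\exp(M_t-\frac12[M]_t)$ is a strictly positive martingale, so the measure $\widehat{P}$ of~\eqref{eq:def_widehatP} is a well-defined probability measure equivalent to $P$ on each $\mathcal{F}_t$. The heart of Girsanov's theorem is then that for every continuous local $P$-martingale $N$ the process $N_t-[N,M]_t$ is a continuous local $\widehat{P}$-martingale, and I would exploit exactly this.

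First I would apply the above with $N=W_j$. Since the $W_j$ are independent Brownian motions, $[W_j,W_k]_t=\delta_{jk}\,t$, and from the definition~\eqref{eq:defM} of $M$ one computes
\[
[W_j,M]_t = -\frac{1}{\sigma}\int_0^t X_{\overline{\jmath}}(s)\,ds,
\]
so that $W_j(t)-[W_j,M]_t = W_j(t)+\frac1{\sigma}\int_0^t X_{\overline{\jmath}}(s)\,ds = B_j(t)$ is a continuous local $\widehat{P}$-martingale. Next I would use that quadratic covariations are invariant under an equivalent change of measure, whence $[B_j,B_k]_t=[W_j,W_k]_t=\delta_{jk}\,t$ under $\widehat{P}$ as well; by the vector-valued Lévy characterization this upgrades $(B_j)_{j\in J}$ to a family of independent Brownian motions on $(\Omega,\mathcal{F}_t,\widehat{P})$.

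It then remains to verify that $X$ solves~\eqref{linear}. I would substitute $dW_j=dB_j-\frac1{\sigma}X_{\overline{\jmath}}\,dt$ into~\eqref{eq:dyad_tree_ito}, using that for $k\in\O j$ the father of $k$ is $j$, so $dW_k=dB_k-\frac1{\sigma}X_j\,dt$. The two stochastic terms then contribute the drifts $-c_jX_{\overline{\jmath}}^2\,dt$ and $+\sum_{k\in\O j}c_kX_jX_k\,dt$, which cancel exactly the original nonlinear drift $c_jX_{\overline{\jmath}}^2\,dt-\sum_{k\in\O j}c_kX_jX_k\,dt$; what survives is precisely the linear equation~\eqref{linear}.

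The main obstacle is the passage from the classical finite-dimensional Girsanov statement to the present countable family $(W_j)_{j\in J}$: one must guarantee that the series in~\eqref{eq:defM} converges, that the exponential density is a true martingale relative to the whole family, and that Lévy's characterization may be invoked for infinitely many processes simultaneously. All of this is controlled by the uniform energy bound of the energy controlled solution, which ensures $[M]_t<\infty$ and validates Novikov's criterion, so that the infinite-dimensional version of the theorem in the cited references applies without modification.
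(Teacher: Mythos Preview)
Your proposal is correct and follows essentially the same route as the paper's proof: both invoke the (infinite-dimensional) Girsanov theorem to identify the $B_j$ as independent $\widehat P$-Brownian motions and then substitute $dW_j=dB_j-\frac1\sigma X_{\overline\jmath}\,dt$ (equivalently, rewrite the stochastic integrals against $dB_j$) to see the nonlinear drift cancel. You simply spell out in more detail the step the paper leaves implicit, namely the computation of $[W_j,M]_t$ and the appeal to L\'evy's characterization.
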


\begin{proof}
Now let's define 
\[
B_j(t) = W_j(t) + \int_0^t\frac1{\sigma}X_{\overline{\jmath}}(s)ds.
\]

Under $\widehat{P}$, $(B_j(t))_{j\in J, t\in [0,T]}$ is a sequence of independent
Brownian motions.

Since
\begin{align*}
  \sigma \int_0^t c_j \xfat(s) dB_j(s) &= \sigma \int_0^t c_j \xfat(s)
  dW_j(s) + \int_0^t c_j \xfat^2(s)ds\\
\sigma \int_0^t c_kX_k(s) dB_k(s) &= \sigma\int_0^t c_kX_k(s)dW_k(s) +
\int_0^t c_kX_j(s)X_k(s)ds \quad k\in\O j.
\end{align*}
Then~\eqref{eq:inutile} can be rewritten in integral form as

\begin{multline}\label{eq:linear_integralform}
  X_j(t) = x_j + \sigma\int_0^tc_j\xfat (s) dB_j(s) - \sigma
  \sum_{k\in \O j}\int_0^t c_kX_k(s) dB_k(s)\\
 - \frac{\sigma^2}{2}\int_0^t (c_j^2+\sum_{k\in \O j}c_k^2)X_j(s)ds,
\end{multline}
which is a linear stochastic equation. 
\end{proof}

\begin{rmk}
  We can write our linear equation~\eqref{linear} also in Stratonovich
  form:
\[
dX_j = \sigma c_j \xfat \circ dB_j(t) - \sum_{k\in \O j} \sigma c_k X_k \circ dB_k(t).
\]
\end{rmk}

\begin{rmk}
  If we look at~\eqref{linear} we can see that it is possible to drop
  the $\sigma$, considering it a part of the coefficients $c_j$.
\end{rmk}

\todo[inline,caption={quadratic variation}]{In equation (10) the author writes $[X_j]_t$ while he
  uses another notation elsewhere. I suggest that he sticks with only
  one formulation for consistency.}
\todo[inline, color=green,nolist]{Changed notation in equation (7) and surroundings}
We can use It\^o formula to calculate
\begin{align}
  \label{eq:itoquadrato} \frac12 dX_j^2 & = X_jdX_j + \frac12
  d[X_j]_t\\
  &= \sigma c_j \xfat X_j dB_j - \sigma \sum_{k \in \O j}c_k X_j
  X_kdB_k\nonumber\\
& \quad -\frac{\sigma^2}{2}(c_j^2+\sum_{k\in \O
      j}c_k^2)X_j^2dt + \frac{\sigma^2}{2}(c_j^2X_{\overline{\jmath}}^2+\sum_{k\in \O j}c_k^2X_k^2)dt\nonumber\\
& = -\frac{\sigma^2}{2}(c_j^2+\sum_{k\in \O
      j}c_k^2)X_j^2dt + dN_j +
    \frac{\sigma^2}{2}(c_j^2X_{\overline{\jmath}}^2+\sum_{k\in \O
      j}c_k^2X_k^2)dt,\nonumber
\end{align}
with 
\begin{equation}\label{eq:def_N}
N_j(t)= \sigma\int_0^tc_jX_{\overline{\jmath}}X_jdB_j - \sigma\sum_{k\in \O j}\int_0^tc_kX_jX_kdB_k.
\end{equation}
This equality will be useful in the following.

We now present an existence result also for system~\eqref{linear}.

\begin{prop}\label{prop:existlinear}
  There exists a solution of~\eqref{linear} in
  $L^\infty(\Omega\times[0,T],l^2)$ with continuous
  components, with initial conditions $x\in l^2$.
\end{prop}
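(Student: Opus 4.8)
The plan is to build a solution of the linear system~\eqref{linear} by a Galerkin approximation in the generation index, in the same spirit as the scheme behind Theorem~\ref{thm:exsoll2} and theorem~3.3 of~\cite{BarBiaFlaMor12}; the essential simplification here is that~\eqref{linear} is \emph{linear}, so no nonlinear a~priori estimate is needed and the truncated problems can be solved exactly. First I would fix $n$ and consider the truncation $X^{(n)}$ obtained by setting $X^{(n)}_j\equiv 0$ for $|j|>n$ and, for the finitely many nodes with $|j|\le n$ (each generation is finite, since every node has a finite offspring set $\O j$), imposing the equations~\eqref{linear} with the flux towards generation $n+1$ switched off, i.e.\ dropping the $\sum_{k\in\O j}c_kX_k\,dB_k$ term and the corresponding $\sum_{k\in\O j}c_k^2$ part of the It\^o drift whenever $|j|=n$. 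This is a finite-dimensional linear SDE with constant coefficients acting multiplicatively on the state, hence it admits a unique global strong solution with continuous paths on a filtered probability space carrying the $B_j$.

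The second step is the uniform energy bound, and this is where the structure already displayed in~\eqref{eq:itoquadrato}--\eqref{eq:def_N} does all the work. Applying It\^o's formula to each $(X^{(n)}_j)^2$ and summing over $|j|\le n$, both the martingale part and the drift telescope exactly: the $dB_j$ contribution at a node cancels the one carried by its father through the term indexed by $j\in\O{\bar\jmath}$, and likewise the loss $-\tfrac{\sigma^2}{2}c_j^2X_j^2$ is matched by the gain $+\tfrac{\sigma^2}{2}c_j^2X_{\overline{\jmath}}^2$ at the father. With the boundary truncation chosen above and $X_0\equiv0$ at the root, all boundary contributions vanish, so I expect the pathwise identity
\[
  \sum_{|j|\le n}\bigl(X^{(n)}_j\bigr)^2(t)=\sum_{|j|\le n}x_j^2\le \|x\|_{l^2}^2
\]
to hold for every $t$ and $P$-a.s. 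In particular $\bigl(X^{(n)}\bigr)_n$ is bounded in $L^\infty(\Omega\times[0,T],l^2)$ uniformly in $n$, which is exactly the bound claimed in the statement and the quantity that must survive to the limit.

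Finally I would pass to the limit $n\to\infty$. The uniform $l^2$ bound gives weak-$*$ compactness in $L^\infty(\Omega\times[0,T],l^2)$; to identify a genuine solution I would instead argue componentwise, using that each scalar equation in~\eqref{linear} couples $X_j$ only to its father and its finitely many children. The uniform bound yields moment estimates on the increments of each $X^{(n)}_j$ (Burkholder--Davis--Gundy on the stochastic integrals together with the linear drift), hence tightness of every fixed component in $C([0,T])$; a diagonal extraction over the countable node set $J$ produces a subsequence along which every $X^{(n)}_j$ converges to a continuous limit $X_j$, and the $l^2$ bound is inherited by Fatou. Passing to the limit inside each scalar equation, the drift converges by dominated convergence and the It\^o integrals converge because their integrands converge while staying $L^2$-bounded, so the limit $(X_j)_{j\in J}$ solves~\eqref{linear} with continuous components and the required $L^\infty(\Omega\times[0,T],l^2)$ bound.

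The step I expect to be the main obstacle is precisely this last limit passage: making the convergence of the stochastic integrals rigorous and guaranteeing that the limit is a bona fide (weak) solution on a single probability space. In an infinite-dimensional state space one usually cannot avoid a Skorokhod representation to upgrade the convergence in distribution to an almost sure one on a new space carrying limiting Brownian motions $B_j$; the linearity of~\eqref{linear} makes the identification of the limit equation considerably easier than in the nonlinear Galerkin argument, but the bookkeeping of the telescoping boundary terms and the simultaneous control of all components are the delicate points.
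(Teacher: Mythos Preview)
Your Galerkin scheme and the telescoping energy computation are essentially those of the paper. Two points of comparison are worth recording.

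First, the boundary truncation. You remove both the stochastic flux to generation $n{+}1$ and the matching It\^o correction at the boundary nodes, which yields the exact pathwise identity $\sum_{|j|\le n}(X_j^{(n)})^2(t)=\sum_{|j|\le n}x_j^2$. The paper instead keeps the full It\^o drift $-\tfrac{\sigma^2}{2}\bigl(c_j^2+\sum_{k\in\O j}c_k^2\bigr)X_j^N$ at $|j|=N$ while only the stochastic sum vanishes (because $X_k^N\equiv0$ for $|k|=N{+}1$); this leaves the boundary dissipation $-\tfrac{\sigma^2}{2}\sum_{|k|=N+1}c_k^2(X_{\overline k}^N)^2$ and hence only the inequality $\sum_{|j|\le N}(X_j^N)^2\le\|x\|_{l^2}^2$. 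Either choice gives the uniform $L^\infty(\Omega\times[0,T],l^2)$ bound, so this is cosmetic.

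Second, and more substantially, the limit passage. You reach for componentwise tightness, a diagonal extraction and Skorokhod, and you correctly flag this as the delicate step. The paper exploits linearity in a way your sketch alludes to but does not actually use: the uniform bound gives a weakly convergent subsequence in $L^2(\Omega\times[0,T],l^2)$ (and weak-$*$ in $L^\infty$); the subspace $\prog$ of progressively measurable processes is strongly, hence weakly, closed, so the limit stays in $\prog$; and each one-dimensional map $Y\mapsto\int_0^\cdot Y\,dB_j$ is a bounded linear operator $\prog\to L^2(\Omega)$, hence weakly continuous. One can therefore pass to the weak limit directly in every scalar equation of~\eqref{eq:linear_integralform} on the \emph{original} probability space, with the original Brownian motions, and continuity of the components is then read off the limit integral equation. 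This removes the Skorokhod bookkeeping you anticipated; it is exactly the simplification that linearity of~\eqref{linear} buys, and it is the step you should replace in your argument.
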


\begin{proof}
  Fix $N\geq1$ and consider the finite dimensional stochastic linear system
\begin{equation}\label{eq:galerkinsto}
\begin{cases}
 dX_j^N=\sigma c_jX_{\overline{\jmath}}^NdB_j(t)-\sigma \sum_{k\in \O
    j}c_kX_k^NdB_k(t)\\
 \qquad \quad -\frac{\sigma^2}{2}(c_j^2+\sum_{k\in \O j}c_k^2)X_j^Ndt
     & j\in J,\ 0\leq |j|\leq N\\
X_k^N(t)\equiv 0 & k\in J,\ |k|=N+1\\
X_j^N(0)=x_j &j\in J,\ 0\leq|j|\leq N.
\end{cases}
\end{equation}
This system has a unique global strong solution $(X_j^N)_{j\in J}$. We can compute,
using~\eqref{eq:itoquadrato} and the definition of $N_j$ in~\eqref{eq:def_N},
\begin{align*}
   \frac{1}{2} d (\sum_{|j|\leq N}(X_j^N(t))^2) & = \sum_{|j|\leq N}(-\frac{\sigma^2}{2}(c_j^2+\sum_{k\in \O
       j}c_k^2)(X_j^N)^2dt + dN_j^N \\
&\quad +
     \frac{\sigma^2}{2}(c_j^2(X_{\overline{\jmath}}^N)^2+\sum_{k\in \O
       j}c_k^2(X_k^N)^2)dt) \\
&=-\sum_{|j|=N}\frac{\sigma^2}{2}\sum_{k\in \O j}c_k^2(X_j^N)^2dt\\
&=-\frac{\sigma^2}{2}\sum_{|k|=N+1}c_k^2(X_{\overline k}^N)^2\leq 0.
\end{align*}
Hence
\[
  \sum_{|j|\leq N}(X_j^N(t))^2 \leq \sum_{|j|\leq N}x_j^2\leq
  \sum_{j\in J}x_j^2 \qquad
  \widehat{P}-\textrm{a.s.}\quad \forall t\geq 0.
\]
This implies that there exists a sequence $N_m\uparrow\infty$ such
that $(X_j^{N_m})_{j\in J}$ converges weakly to some $(X_j)_{j\in J}$ in
$L^2(\Omega\times[0,T],l^2)$ and also weakly star in
$L^\infty(\Omega\times[0,T],l^2)$, so $(X_j)_{j\in J}$ is in
$L^\infty(\Omega\times[0,T],l^2)$. 

Now for every $N \in \N$, $(X_j^N)_{j\in J}$ is in$\prog$, the
subspace of progressively measurable processes in
$L^2(\Omega\times[0,T],l^2)$. But $\prog$ is 
strongly closed, hence weakly closed, so $(X_j)_{j\in J}\in \prog$.

We just have to prove that $(X_j)_{j\in J}$ solves~\eqref{linear}. All the
one dimensional stochastic integrals that appear in each equation
in~\eqref{eq:linear_integralform} are linear strongly continuous
operators $\prog\to L^2(\Omega)$, hence weakly continuous. Then we can
pass to the weak limit in~\eqref{eq:galerkinsto}. Moreover from the
integral equations~\eqref{eq:linear_integralform} we have that there
is a modification of the solution which is continuous in all the components.
\end{proof}

\section{Closed equation for $E_{\widehat{P}}[X_j^2(t)]$}

\begin{prop}
For every energy controlled solution $X$ of the nonlinear
equation~\eqref{eq:dyad_tree_ito}, $E_{\widehat{P}}[X_j^2(t)]$ is finite for every
$j\in J$ and satisfies
\begin{multline}
  \label{eq:secondmoment}
  \frac{d}{dt}E_{\widehat{P}}[X_j^2(t)]= -\sigma^2(c_j^2+\sum_{k\in \O
      j}c_k^2)E_{\widehat{P}}[X_j^2(t)]\\+
    \sigma^2c_j^2E_{\widehat{P}}[X_{\overline{\jmath}}^2(t)]+\sigma^2\sum_{k\in \O
      j}c_k^2E_{\widehat{P}}[X_k^2(t)].
\end{multline}
\end{prop}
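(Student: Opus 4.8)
The plan is to obtain the stated ODE by applying the It\^o formula to $X_j^2$ under the measure $\widehat{P}$, integrating in time, and taking expectation. The crucial identity is already available: since the components satisfy the linear equation~\eqref{linear} under $\widehat{P}$, equation~\eqref{eq:itoquadrato} gives
\[
  \tfrac12 dX_j^2 = -\tfrac{\sigma^2}{2}\Bigl(c_j^2+\sum_{k\in\O j}c_k^2\Bigr)X_j^2\,dt + dN_j + \tfrac{\sigma^2}{2}\Bigl(c_j^2 X_{\overline{\jmath}}^2+\sum_{k\in\O j}c_k^2 X_k^2\Bigr)dt,
\]
with $N_j$ the local martingale defined in~\eqref{eq:def_N}. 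Integrating on $[0,t]$ and taking $E_{\widehat{P}}$ will formally produce the integral version of~\eqref{eq:secondmoment}, provided that (i) every term is integrable and (ii) the martingale term has vanishing expectation. Differentiating the resulting integral identity in $t$ then yields the claimed equation.

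First I would record the a priori bound. Since $X$ is energy controlled, $\sum_{j\in J}X_j^2(t)\le\sum_{j\in J}x_j^2=\|x\|_{l^2}^2$ holds $P$-a.s., and because $\widehat{P}\sim P$ on each $\mathcal{F}_t$ the same bound holds $\widehat{P}$-a.s. In particular $X_j^2(t)\le\|x\|_{l^2}^2$ for every $j$, so $E_{\widehat{P}}[X_j^2(t)]\le\|x\|_{l^2}^2<\infty$, which is the asserted finiteness and simultaneously makes the two drift terms integrable. The main point is to upgrade $N_j$ from a local martingale to a genuine martingale under $\widehat{P}$, so that $E_{\widehat{P}}[N_j(t)]=0$. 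From~\eqref{eq:def_N} its quadratic variation is
\[
  [N_j]_t=\sigma^2 c_j^2\int_0^t X_{\overline{\jmath}}^2 X_j^2\,ds + \sigma^2\sum_{k\in\O j}c_k^2\int_0^t X_j^2 X_k^2\,ds,
\]
and the uniform bound above gives $X_{\overline{\jmath}}^2 X_j^2\le\|x\|_{l^2}^4$ and $X_j^2 X_k^2\le\|x\|_{l^2}^4$, whence
\[
  E_{\widehat{P}}\bigl[[N_j]_t\bigr]\le \sigma^2\Bigl(c_j^2+\sum_{k\in\O j}c_k^2\Bigr)\|x\|_{l^2}^4\,t<\infty.
\]
Hence $N_j$ is an $L^2(\widehat{P})$ martingale with $E_{\widehat{P}}[N_j(t)]=0$. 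This quadratic-variation estimate, resting on the energy control, is the step I expect to be the real obstacle, since without the $l^2$ energy bound the products $X_{\overline{\jmath}}^2 X_j^2$ need not be integrable and $N_j$ could fail to be a true martingale.

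Putting these together, integrating~\eqref{eq:itoquadrato} and taking expectation gives
\[
  E_{\widehat{P}}[X_j^2(t)] = x_j^2 - \sigma^2\Bigl(c_j^2+\sum_{k\in\O j}c_k^2\Bigr)\int_0^t E_{\widehat{P}}[X_j^2(s)]\,ds + \sigma^2\int_0^t\Bigl(c_j^2 E_{\widehat{P}}[X_{\overline{\jmath}}^2(s)]+\sum_{k\in\O j}c_k^2 E_{\widehat{P}}[X_k^2(s)]\Bigr)ds.
\]
Finally I would differentiate in $t$. Each map $s\mapsto E_{\widehat{P}}[X_\ell^2(s)]$ is continuous, because $X_\ell$ has continuous paths and is dominated by the constant $\|x\|_{l^2}^2$, so dominated convergence applies; the integrands above are therefore continuous. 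The fundamental theorem of calculus then yields~\eqref{eq:secondmoment}. The finiteness of the offspring set $\O j$ keeps every sum finite, so no extra care is needed in exchanging summation and integration.
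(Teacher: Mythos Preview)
Your proof is correct and follows essentially the same route as the paper: both use the It\^o identity~\eqref{eq:itoquadrato}, transfer the energy-control bound from $P$ to $\widehat{P}$ via equivalence on $\mathcal{F}_t$, use the resulting uniform bound on fourth-order products to show $N_j$ is a true martingale, take expectation, and differentiate. Your version is in fact slightly more explicit, computing $[N_j]_t$ directly and justifying the differentiability step via dominated convergence, whereas the paper packages the same estimate as $E_{\widehat{P}}\bigl[\int_0^T X_j^4\,dt\bigr]<\infty$.
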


\begin{proof}
Let $(\Omega, \mathcal{F}_t,P,W,X)$ be an energy controlled solution of
the nonlinear equation~\eqref{eq:dyad_tree_ito}, with initial
condition $X\in l^2$ and let $\widehat{P}$ be the measure given by
Theorem~\ref{thm:lin}. Denote by $E_{\widehat{P}}$ the expectation with respect to
$\widehat{P}$ in $(\Omega, \mathcal{F}_t)$. 


Notice that
\begin{equation}
  \label{eq:boundfourth}
  E_{\widehat{P}}[\int_0^TX_j^4(t)dt]<\infty \qquad \forall j\in J.
\end{equation}

For energy controlled solutions from the definition we have that
$P$-a.s.
\[
\sum_{j\in J}X_j^4(t) \leq \max_{j\in J}X_j^2(t) \sum_{j\in
  J}X_j^2(t)\leq (\sum_{j\in J}x_j^2)^2,
\]
because of the behavior of the energy we showed. But on every
$\mathcal{F}_t$, $P\sim \widehat{P}$, so
\[
\widehat{P}(\sum_{j\in J}X_j^4(t)\leq (\sum_{j\in J}x_j^2)^2)=1,
\]
and~\eqref{eq:boundfourth} holds.

From~\eqref{eq:boundfourth} it follows that $M_j(t)$ is a martingale
for every $j\in J$. Moreover
\[
E_{\widehat{P}}[\sum_{j\in J}X_j^2(t)]<\infty,
\]
since $X_j(t)$ is an energy controlled solution and the condition is
invariant under the change of measure $P\leftrightarrow \widehat{P}$ on
$\mathcal{F}_t$ and, in particular,

\[
E_{\widehat{P}}[X_j^2(t)]<\infty\qquad \forall j\in J.
\]

Now let's write~\eqref{eq:itoquadrato} in integral form:
\begin{multline*}
X_j^2(t)-x_j^2 = -\sigma^2\int_0^t(c_j^2+\sum_{k\in \O
      j}c_k^2)X_j^2(s)ds \\+ 2\int_0^tdN_j(s) +
    \sigma^2\int_0^t(c_j^2X_{\overline{\jmath}}^2(s)+\sum_{k\in \O
      j}c_k^2X_k^2(s))ds.
\end{multline*}
We can take the $\widehat{P}$ expectation,
\begin{multline*}
E_{\widehat{P}}[X_j^2(t)]-x_j^2 = -\sigma^2\int_0^t(c_j^2+\sum_{k\in \O
      j}c_k^2)E_{\widehat{P}}[X_j^2(s)]ds \\+
    \sigma^2\int_0^tc_j^2E_{\widehat{P}}[X_{\overline{\jmath}}^2(s)]ds+\sigma^2\sum_{k\in \O
      j}\int_0^tc_k^2E_{\widehat{P}}[X_k^2(s)]ds,
\end{multline*}
where the $N_j$ term vanishes, since it's a $\widehat{P}$-martingale. Now we can
derive and the proposition is established.
\end{proof}

  It's worth stressing that $E_{\widehat{P}}[X_j^2(t)]$ satisfies a closed
  equation. Even more interesting is the fact that this is the forward
  equation of a continuous-time Markov chain, as we will see in the
  following section.

\section{Associated Markov chain}
\label{sec:assmarkcha}

  We want to show and use this characterization of the second moments
  equation as the forward equation of a Markov chain, taking advantage of some known
  results in the Markov chains theory. We follow the transition
  functions approach to continuous times Markov chains; we don't
  assume any knowledge of this theory, so we will provide the basic
  definitions and results we need. More results can be found in the
  literature, see for example~\cite{MR1118840}.

  \begin{defi}
    A non-negative function $f_{j,l}(t)$ with $j,l \in J$ and $t\geq 0$ is a
    \emph{transition function} on $J$ if $f_{jl}(0)=\delta_{jl}$,
    \[
      \sum_{l\in J}f_{jl}(t)\leq 1 \qquad \forall j\in J, \ \forall
      t\geq 0,
    \]
    and it satisfies the semigroup property (or Chapman-Kolmogorov
    equation)
    \[
      f_{jl}(t+s)=\sum_{h\in J}f_{jh}(t)f_{hl}(s)\qquad \forall j,l\in
      J,\ \forall t,s\geq 0.
    \]
  \end{defi}

  \begin{defi}
    A \emph{$q$-matrix} $Q=(q_{jl})_{j,l\in J}$ is a square matrix
    such that
    \begin{gather*}
      0\leq q_{jl}<+\infty\qquad \forall j\neq l \in J,\\
      \sum_{l\neq j}q_{jl}\leq -q_{jj}\eqqcolon q_j \leq +\infty\qquad
      \forall j\in J.
    \end{gather*}
    A $q$-matrix is called \emph{stable} if all $q_j$'s are finite,
    and \emph{conservative} if
    \[
      q_j=\sum_{l\neq j}q_{jl}\qquad \forall j\in J.
    \]

    If $Q$ is a $q$-matrix, a $Q$-function is a transition function
    $f_{jl}(t)$ such that $f^\prime_{jl}(0)=Q$.
  \end{defi}

  The $q$-matrix shows a close resemblance to the infinitesimal
  generator of the transition function, but they differ, since the
  former doesn't determine a unique transition function, while the
  latter does. Still this approach can be seen as a generator approach
  to Markov chains in continuous times.

  Now let's see these objects in our framework: let's
  write~\eqref{eq:secondmoment} in matrix form. Let $Q$ be the
  infinite dimensional matrix which entries are defined as 
\[ 
    q_{j,j} = -\sigma^2(c_j^2+\sum_{k\in \O
      j}c_k^2) \quad q_{j,\bar\jmath} = \sigma^2c_j^2 \quad
    q_{j,k}=\mathbbm{1}_{\{k\in \O j\}}\sigma^2c_k^2 \ \textrm{for
    }k\neq j,\bar\jmath
\]

\begin{prop}\label{prop:qmatrix}
  The infinite matrix $Q$ defined above is the stable and conservative
  $q$-matrix. Moreover $Q$ is symmetric.
\end{prop}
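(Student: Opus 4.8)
The plan is to verify directly the several requirements packed into the statement: the defining inequalities of a $q$-matrix, stability (finiteness of each $q_j$), conservativeness (equality in the row-sum inequality), and finally symmetry. The first observation I would make is structural: in any fixed row $j$, the only off-diagonal entries that are not zero are the single ``father'' entry $q_{j,\overline{\jmath}}=\sigma^2 c_j^2$ and the finitely many ``offspring'' entries $q_{j,k}=\sigma^2 c_k^2$ for $k\in\O j$; every other $q_{jl}$ with $l\neq j$ vanishes, because such an $l$ is then neither the parent nor a child of $j$. Since the $c$'s are positive reals and $\sigma\neq 0$, all these entries are nonnegative and finite, which is exactly the condition $0\le q_{jl}<+\infty$ for $j\neq l$.

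Next I would compute the row sum. Adding the nonzero off-diagonal entries gives
\[
\sum_{l\neq j}q_{jl}=\sigma^2 c_j^2+\sum_{k\in\O j}\sigma^2 c_k^2=\sigma^2\Big(c_j^2+\sum_{k\in\O j}c_k^2\Big)=-q_{jj}=q_j,
\]
so the defining inequality $\sum_{l\neq j}q_{jl}\le q_j$ holds with equality, which is precisely conservativeness. Because $\O j$ is a \emph{finite} set, the sum on the right is finite, so each $q_j<+\infty$ and $Q$ is stable. The only point deserving a separate glance is the root: there $c_0=0$, so $q_{0,\bar0}=\sigma^2 c_0^2=0$ and the fictitious father term drops out, consistently with $0$ having no parent in $J$, while the offspring terms are covered by the generic computation.

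Finally, for symmetry I would note that the nonzero off-diagonal entries of $Q$ are supported exactly on the edges of the tree. An edge joins a node $k$ to its father $\bar k$, and it contributes two entries: $q_{k,\bar k}=\sigma^2 c_k^2$ (read from the child, as a father entry) and $q_{\bar k,k}=\sigma^2 c_k^2$ (read from the father, as an offspring entry, with $k$ playing the role of the index ranging over $\O{\bar k}$). Both equal $\sigma^2 c_k^2$, so $q_{k,\bar k}=q_{\bar k,k}$; all remaining off-diagonal pairs are $(0,0)$, and the diagonal is trivially symmetric, whence $Q$ is symmetric. I do not expect a genuine obstacle in this proposition; the only care required is to match each father entry with the offspring entry of the corresponding parent, observing that both carry the coefficient $c_k^2$ indexed by the \emph{lower} node $k$, and to treat the root node separately.
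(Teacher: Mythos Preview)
Your proof is correct and follows essentially the same direct verification as the paper: identify the nonzero off-diagonal entries, check nonnegativity, compute the row sum to obtain conservativeness (and hence stability, since $\O j$ is finite), and for symmetry match the father entry $q_{k,\bar k}$ with the offspring entry $q_{\bar k,k}$, both equal to $\sigma^2 c_k^2$. You are slightly more thorough in isolating the root case and in spelling out why each $q_j$ is finite, but the argument is the same.
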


\begin{proof}
  It's easy to check that $Q$ is a stable and conservative
  $q$-matrix. First of all $q_{j,j}<0$ for all $j\in J$ and
  $q_{j,l}\geq 0$ for all $j\neq l$. Then
\[
q_{j}=\sum_{l\neq j}q_{j,l}=q_{j,\overline \jmath}+\sum_{k\in \O
  j}q_{j,k} = \sigma^2c_j^2 + \sum_{k\in \O
  j}\sigma^2c_k^2 = -q_{j,j}.
\]

Moreover it is very easy to check that the matrix is symmetric:
\[
  q_{ij}=\left\{
    \begin{aligned}
      \sigma^2 c_j^2 \quad 
      l=\overline{\jmath}\quad & \Leftrightarrow
      \quad j\in \O l\quad \sigma^2 c_j^2\\
      \sigma^2 c_l^2\quad l\in \O j\quad &\Leftrightarrow
      \quad j=\overline{l}\quad \sigma^2 c_l^2
    \end{aligned}
\right\}=q_{lj}
\]
\end{proof}

Since $Q$ is a $q$-matrix we can construct the process associated,  as a jump
and hold process on the space state, which in our case is the tree of
the dyadic model. The process will wait in node $j$ for an exponential 
time of parameter $q_j$, and then will jump to $\bar\jmath$ or
$k\in \O j$ with probabilities $q_{j,\bar\jmath}/q_j$ and
$q_{j,k}/q_j$ respectively. This process is a continuous time Markov
chain that has $J$ as a state space and also has the same skeleton as
the dyadic tree model, meaning that the transition probabilities are
non-zero only if one of the nodes is the father of the other one.

 Given a $q$-matrix $Q$, it is naturally associated with two
 (systems of) differential equations:
 \begin{gather}
  \label{eq:fwdkolm}
  y^\prime_{jl}(t) = \sum_{h\in J}y_{jh}(t) q_{hl}\\
  \nonumber y_{jl}^\prime(t) = \sum_{h\in J}q_{jh} y_{hl}(t),
 \end{gather}
 called forward and backwards Kolmogorov equations, respectively.

\begin{lem}\label{lemma:uniq_fwd}
  Given a stable, symmetric and conservative $q$-matrix $Q$, 
  then the unique nonnegative solution of the
  forward equations~\eqref{eq:fwdkolm} in
  $L^\infty([0,\infty),l^1)$, given a null
  initial condition $y(0)=0$, is $y(t)=0$.
\end{lem}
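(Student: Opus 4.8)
The plan is to show that a nonnegative solution $y$ lying in $L^\infty([0,\infty),l^1)$ must conserve its total $l^1$-mass, so that the null initial datum forces $y\equiv 0$. The forward equations decouple in the starting index, so I fix $j$ and set $z_l(t)=y_{jl}(t)$; the equation reads $z_l'(t)=\sum_{h\in J}z_h(t)q_{hl}=-q_lz_l(t)+\sum_{h\neq l}z_h(t)q_{lh}$, where I used the symmetry $q_{hl}=q_{lh}$. Because $Q$ is conservative, $\sum_{l\in J}q_{hl}=0$ for every $h$, so at a formal level $\frac{d}{dt}\sum_l z_l=\sum_h z_h\sum_l q_{hl}=0$ and the mass $\sum_l z_l(t)$ is constant. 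Since $z(0)=0$ and $z_l\geq 0$, this yields $\sum_l z_l(t)=0$, hence $z\equiv 0$; as $j$ is arbitrary, $y\equiv 0$.

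To make this rigorous I would work with finite truncations. For a finite $F\subset J$, summing the equation over $l\in F$ and using $q_lz_l=\sum_{h\neq l}q_{lh}z_l$ together with symmetry to cancel the pairs with both indices in $F$, I obtain the boundary-flux identity $\frac{d}{dt}\sum_{l\in F}z_l=\sum_{l\in F}\sum_{h\notin F}q_{lh}(z_h-z_l)$. Integrating on $[0,t]$ and using $z(0)=0$ gives $\sum_{l\in F}z_l(t)=\int_0^t\sum_{l\in F}\sum_{h\notin F}q_{lh}(z_h(s)-z_l(s))\,ds$. Letting $F\uparrow J$ along an exhaustion, the left-hand side increases to $\|z(t)\|_{l^1}$ by monotone convergence (every term is nonnegative), so everything reduces to proving that the boundary flux on the right tends to $0$.

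The main obstacle is exactly this vanishing: the positive part of the flux is bounded by $\sum_{h\notin F}q_h z_h(s)$ and the negative part by $\sum_{l\in F}z_l(s)\sum_{h\notin F}q_{lh}$, and both are tails of the weighted total rate $\sum_l q_l z_l(s)$, which is delicate to control because $Q$ is only assumed stable (each $q_l<\infty$) while the $q_l$ may be unbounded. I expect to dispatch it by combining nonnegativity with the global $L^\infty([0,\infty),l^1)$ bound, most cleanly via the resolvent: setting $w_l=\int_0^\infty e^{-\lambda t}z_l(t)\,dt$ for $\lambda>0$ produces a nonnegative $l^1$ sequence solving $(\lambda I-Q)w=0$, that is $Qw=\lambda w$. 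Since $l^1\subset l^2$ and $Q$ is symmetric and conservative, the associated Dirichlet form gives $\langle -Qw,w\rangle=\tfrac12\sum_{l\neq h}q_{lh}(w_l-w_h)^2\geq 0$, whereas $\langle Qw,w\rangle=\lambda\|w\|_{l^2}^2$; comparing the two signs forces $\|w\|_{l^2}=0$, hence $z\equiv 0$ and $y\equiv 0$. The single delicate point is the justification of this form identity (equivalently, the vanishing of the boundary flux above), which I would secure using the per-component finiteness of $\sum_{h\neq l}q_{lh}z_h(t)$ inherent in $z$ being a genuine solution, together with monotone and dominated convergence under the uniform $l^1$ bound.
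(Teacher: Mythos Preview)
Your resolvent idea is exactly the one the paper uses: pass to $\hat y_l=\int_0^\infty e^{-t}y_l(t)\,dt$ and study the algebraic relation $(\hat y)_l=\sum_h \hat y_h q_{hl}$. Where you diverge is in how you exploit it. You try a Dirichlet--form/energy argument, pairing $Qw=\lambda w$ with $w$ in $l^2$ to get $\lambda\|w\|_2^2=\langle Qw,w\rangle=-\tfrac12\sum_{l\neq h}q_{lh}(w_l-w_h)^2\le 0$. The paper instead runs a \emph{maximum principle}: since $\hat y\ge 0$ and $\hat y\in l^1$, the supremum is attained at some index $k$; the single equation at $k$ together with symmetry $q_{lk}=q_{kl}$ and conservativeness $\sum_{l\neq k}q_{kl}=q_k$ gives
\[
\hat y_k=-q_k\hat y_k+\sum_{l\neq k}\hat y_l\,q_{lk}\le -q_k\hat y_k+\hat y_k\sum_{l\neq k}q_{kl}=0,
\]
so $\hat y_k=0$ and hence $\hat y\equiv 0$. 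This uses only one row of $Q$ at a time and never needs any global weighted summability.

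Your route, by contrast, has the gap you yourself flag, and the tools you invoke do not close it. The form identity $\langle -Qw,w\rangle=\tfrac12\sum_{l\neq h}q_{lh}(w_l-w_h)^2$ is obtained by rearranging a double sum; the rearrangement is valid only if $\sum_l q_l w_l^2<\infty$ (equivalently, if $\sum_l q_l w_l<\infty$, which is exactly the boundary flux you need to kill in your truncation argument). Stability gives only $q_l<\infty$ for each $l$, with no uniform bound, and $w\in l^1$ alone does not force $\sum_l q_l w_l<\infty$. ``Per-component finiteness of $\sum_{h\neq l}q_{lh}z_h$'' is a statement about each fixed $l$ and gives no control on the sum over $l$; monotone/dominated convergence needs a summable majorant you have not produced. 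Indeed, summing $(\lambda+q_l)w_l=\sum_{h\neq l}q_{lh}w_h$ over $l$ and using symmetry yields $\sum_l q_l w_l+\lambda\|w\|_1=\sum_l q_l w_l$, which is the conclusion you want \emph{provided} $\sum_l q_l w_l<\infty$, and a tautology otherwise. So the whole argument hinges on a bound you have not established. The paper's maximum principle is precisely the device that avoids this; I would replace your Dirichlet--form step with it.
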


\begin{proof}
  Let $y$ be a generic solution, then
  \begin{equation}
    \label{eq:anderson2.8}
    \left\{
      \begin{aligned}
        &\frac{d}{dt}y_j(t)=\sum_{i\in J}y_i(t)q_{ij}\\
        &y_j(t)\geq 0 \quad j\in J\\
        &y_j(0)=0 \quad j\in J\\
        &\sum_{j\in J}y_j(t) < +\infty.
      \end{aligned}
    \right.
  \end{equation}

We can consider for every node
$\hat{y}_j=\int_0^{+\infty}e^{-t}y_j(t)dt$, the Laplace transform in
1. From the last equation of the system above, we have $\sum_j
\hat{y}_j\leq M$, for some constant $M>0$, so in particular we can
consider $k \in J$ such that $\hat{y}_k\geq \hat{y}_j$, for all $j\in
J$.

Now we want to show that $y_k^\prime (t)$ is bounded: thanks to
the symmetry and stability of $Q$ we have
\[
  |y_k^\prime (t)|\leq |-q_ky_k(t)| + |\sum_{l\neq k}y_l(t)q_{lk}|
  \leq q_kM + q_kM < +\infty.
\]
We can integrate by parts
\begin{multline}
  \hat{y}_k = \int_0^{+\infty}e^{-t}y_k^\prime(t)dt =
  \int_0^{+\infty}e^{-t}\sum_{l\in J}y_l(t)q_{lk}dt = \sum_{l\in
    J}\hat{y}_lq_{lk}\\
  = -\hat{y}_kq_k + \sum_{l\neq k}\hat{y}_lq_{lk} \leq \hat{y}_k(-q_k+\sum_{l\neq k}q_{kl})=0,
\end{multline}
where the last equality follows from the conservativeness of $Q$, and we
used the stability and symmetry. Now we have $\hat{y}_k=0$ and so all
$\hat{y}_j=0$, hence $y_j(t)=0$ for all $j\in J$, for all $t\geq 0$.
\end{proof}

\section{Uniqueness}\label{sec:uniqueness}
Now we can use the results of the previous section to prove the main
results of this paper.

\begin{thm}\label{thm:struniq}
    There is strong uniqueness for the linear system~\eqref{linear} in
    the class of energy controlled $L^\infty(\Omega \times [0,T],l^2)$ solutions.
  \end{thm}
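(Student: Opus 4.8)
The plan is to use the linearity of \eqref{linear} to reduce pathwise uniqueness to the forward Kolmogorov alternative already settled in Lemma~\ref{lemma:uniq_fwd}. Fix the filtered probability space, the measure $\widehat{P}$ and the driving Brownian motions $(B_j)_{j\in J}$, and let $X^1$ and $X^2$ be two energy controlled $L^\infty(\Omega\times[0,T],l^2)$ solutions of \eqref{linear} sharing the initial datum $x\in l^2$. Set $Y=X^1-X^2$. Since the integral equations \eqref{eq:linear_integralform} are linear and homogeneous in the unknown and both solutions are driven by the same $(B_j)_{j\in J}$, subtracting them shows that $Y$ is itself a solution of \eqref{linear} with null initial condition $Y(0)=0$. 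The goal becomes to prove $Y\equiv 0$ $\widehat{P}$-a.s.; since $P\sim\widehat{P}$ on every $\mathcal{F}_t$, this immediately yields $Y\equiv 0$ also $P$-a.s.

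Next I would pass to the second moments $y_j(t)=E_{\widehat{P}}[Y_j^2(t)]$ and argue that the computation leading to \eqref{eq:secondmoment} applies to $Y$. The difference $Y$ is no longer energy controlled in the monotone sense, but the energy control of $X^1$ and $X^2$ gives the uniform bound $\sum_{j\in J}Y_j^2(t)\le 4\sum_{j\in J}x_j^2$ for all $t\ge 0$, $\widehat{P}$-a.s. This is precisely the estimate behind the fourth-moment bound \eqref{eq:boundfourth}, which guarantees that the martingale part $N_j$ of \eqref{eq:itoquadrato}, defined in \eqref{eq:def_N}, is a genuine $\widehat{P}$-martingale. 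Taking $\widehat{P}$-expectations in the integral form of \eqref{eq:itoquadrato} and differentiating, $y_j$ is seen to satisfy the closed system \eqref{eq:secondmoment}. Reading off the entries of $Q$ from Proposition~\ref{prop:qmatrix}, this is exactly the forward Kolmogorov equation \eqref{eq:fwdkolm} for $y_j$, namely $y_j^\prime(t)=\sum_{i\in J}y_i(t)q_{ij}$.

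It then remains to invoke the uniqueness lemma. By construction $y_j(t)\ge 0$ and $y_j(0)=E_{\widehat{P}}[Y_j^2(0)]=0$, while the uniform energy bound gives $\sum_{j\in J}y_j(t)=E_{\widehat{P}}[\sum_{j\in J}Y_j^2(t)]\le 4\sum_{j\in J}x_j^2$, so $y\in L^\infty([0,\infty),l^1)$. Since $Q$ is a stable, symmetric and conservative $q$-matrix by Proposition~\ref{prop:qmatrix}, Lemma~\ref{lemma:uniq_fwd} applies and forces $y\equiv 0$. Hence $E_{\widehat{P}}[Y_j^2(t)]=0$ for every $j\in J$ and every $t\ge 0$, so $Y_j(t)=0$ $\widehat{P}$-a.s. for each fixed $j$ and $t$; by continuity of the components this upgrades to $Y\equiv 0$ on $[0,T]$ a.s., which is the claimed strong uniqueness.

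The genuinely delicate step, I expect, is the justification that $y_j=E_{\widehat{P}}[Y_j^2]$ solves the closed forward equation and not merely a differential inequality: one must verify that $N_j$ is a true martingale (so that its expectation vanishes), and that the differentiation under the expectation and the interchange of the infinite sum over $J$ with $E_{\widehat{P}}$ are legitimate. Each of these hinges on the uniform $l^2$-energy bound, so the real care lies in transferring the energy control of $X^1$ and $X^2$ to the difference $Y$, which is not itself energy controlled. Once the forward equation is established, the entire weight of the uniqueness rests on Lemma~\ref{lemma:uniq_fwd}, whose Laplace-transform argument is where the symmetry and conservativeness of $Q$ are essential.
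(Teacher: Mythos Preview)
Your proposal is correct and follows exactly the route the paper takes: reduce by linearity to the null initial condition, pass to the second moments via \eqref{eq:itoquadrato}--\eqref{eq:secondmoment}, and invoke Proposition~\ref{prop:qmatrix} together with Lemma~\ref{lemma:uniq_fwd} to force $E_{\widehat P}[Y_j^2]\equiv 0$. The paper's own proof is a three-line sketch that simply cites \eqref{eq:secondmoment}, Proposition~\ref{prop:qmatrix} and Lemma~\ref{lemma:uniq_fwd}; you have unpacked the same argument and, in fact, been more careful than the paper on one point---you explicitly note that the difference $Y$ is not literally energy controlled but inherits a uniform $l^2$ bound $\sum_jY_j^2(t)\le 4\sum_jx_j^2$, which is what is actually needed for $N_j$ to be a true $\widehat P$-martingale and for $y\in L^\infty([0,\infty),l^1)$.
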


  \begin{proof}
    By linearity of~\eqref{linear} it is enough to prove that for null
    initial conditions there is no nontrivial solution. Since we
    have~\eqref{eq:secondmoment}, proposition~\ref{prop:qmatrix}
    and lemma~\ref{lemma:uniq_fwd}, then $E_{\widehat{P}}[X_j^2(t)]=0$ for all $j$ and
    $t$, hence $X=0$ a.s.
  \end{proof}

  Let's recall that we already proved an existence result
  for~\eqref{linear} with proposition~\ref{prop:existlinear}. 

\begin{thm}\label{thm:weakuniq}
    There is  uniqueness in law for the nonlinear system~\eqref{eq:dyad_tree_ito} in
    the class of energy controlled $L^\infty(\Omega \times [0,T],l^2)$
    solutions. 
  \end{thm}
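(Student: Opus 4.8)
The plan is to combine the Girsanov reduction of Theorem~\ref{thm:lin} with the strong uniqueness of the linear system (Theorem~\ref{thm:struniq}) and then invert the change of measure. Let $(\Omega^i,\mathcal F^i_t,P^i,W^i,X^i)$, $i=1,2$, be two energy controlled solutions of~\eqref{eq:dyad_tree_ito} with the same initial datum $x\in l^2$; I must show that $X^1$ and $X^2$ induce the same law on $C([0,T];l^2)$.

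First I would apply Theorem~\ref{thm:lin} to each solution, obtaining measures $\widehat P^i$ equivalent to $P^i$ on every $\mathcal F^i_t$ and Brownian motions $B^i=(B^i_j)_{j\in J}$ such that $(\Omega^i,\mathcal F^i_t,\widehat P^i,B^i,X^i)$ solves the linear equation~\eqref{linear}. Since $P^i\sim\widehat P^i$ on each $\mathcal F^i_t$, the energy bound passes to $\widehat P^i$, so each $X^i$ is an energy controlled solution of~\eqref{linear}. Proposition~\ref{prop:existlinear} supplies weak existence and Theorem~\ref{thm:struniq} supplies pathwise uniqueness for~\eqref{linear} in this class; by the Yamada--Watanabe principle (in its infinite dimensional form) pathwise uniqueness together with weak existence yields uniqueness in law of the pair $(X,B)$. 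Hence the joint law $\nu$ of $(X^i,B^i)$ under $\widehat P^i$ on $C([0,T];l^2)\times C([0,T];\R^J)$ does not depend on $i$.

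Next I would invert the Girsanov density. Substituting $dW_j=dB_j-\frac1\sigma X_{\overline{\jmath}}\,dt$ in~\eqref{eq:def_widehatP} gives
\[
\frac{dP^i}{d\widehat P^i}\Big|_{\mathcal F_T}=\exp\Big(\frac1\sigma\sum_{j\in J}\int_0^TX^i_{\overline{\jmath}}(s)\,dB^i_j(s)-\frac1{2\sigma^2}\int_0^T\sum_{j\in J}(X^i_{\overline{\jmath}}(s))^2\,ds\Big),
\]
which under $\widehat P^i$ is the Novikov exponential of the $\widehat P^i$-martingale $\widehat M^i_t=\frac1\sigma\sum_j\int_0^tX^i_{\overline{\jmath}}\,dB^i_j$ (finite because $\energy$ is bounded). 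The crucial point is that this density is one and the same measurable functional $H$ of the path $(X^i,B^i)$ for both values of $i$: the stochastic integral $\widehat M^i$ is the limit in probability of Riemann sums that are explicit continuous functionals of $(X^i,B^i)$, and since the law of $(X^i,B^i)$ is $\nu$ for both $i$, these sums converge in $\nu$-probability, so the limit defines a single $\nu$-a.s.\ measurable map, independent of the underlying space.

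Finally, for any bounded measurable $\varphi$ on $C([0,T];l^2)$,
\[
E_{P^i}[\varphi(X^i)]=E_{\widehat P^i}\Big[\varphi(X^i)\,\frac{dP^i}{d\widehat P^i}\Big|_{\mathcal F_T}\Big]=\int \varphi(x)\,H(x,b)\,d\nu(x,b),
\]
and the right-hand side is independent of $i$ because both $\nu$ and $H$ are. Therefore $X^1$ and $X^2$ have the same law, which is exactly uniqueness in law for~\eqref{eq:dyad_tree_ito}. The step I expect to be the main obstacle is this inversion: making the Radon--Nikodym density a genuine, space-independent measurable functional of the observed paths, so that equality of the joint laws $\nu$ transfers to equality of the $P^i$-laws, together with the care needed to control the infinite sum over $j\in J$ and to justify the Yamada--Watanabe conclusion in the infinite-dimensional, energy-controlled setting.
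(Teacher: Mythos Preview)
Your proposal is correct and follows essentially the same route as the paper: apply Theorem~\ref{thm:lin} to pass to the linear system under $\widehat P^i$, invoke Proposition~\ref{prop:existlinear} and Theorem~\ref{thm:struniq} together with Yamada--Watanabe to obtain that the joint law of $(X^i,B^i)$ under $\widehat P^i$ is independent of $i$, and then invert the Girsanov change of measure. The only cosmetic difference is that the paper keeps the density written in terms of $M^{(i)}$ and says it ``includes $M^{(i)}$ in the system'' to get equality of the laws of $(X^{(i)},M^{(i)})$, whereas you substitute $dW_j=dB_j-\sigma^{-1}X_{\overline\jmath}\,dt$ to express the density directly as a functional $H(X,B)$; your version is in fact slightly more explicit about why the density is determined by the common law $\nu$.
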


  \begin{proof}
  Assume that $(\Omega^{(i)},
  \mathcal{F}_t^{(i)},P^{(i)},W^{(i)},X^{(i)})$, $i=1,2$, are two
  solutions of~\eqref{eq:dyad_tree_ito} with the same initial
  conditions $x\in l^2$. Given $n\in \N$, $t_1,\ldots,t_n \in [0,T]$
  and a measurable and bounded function $f:(l^2)^n\to \R$, we want to
  prove that
\begin{equation}\label{eq:thesisuniqueness}
E_{P^{(1)}}[f(X^{(1)}(t_1),\ldots,X^{(1)}(t_n))]=E_{P^{(2)}}[f(X^{(2)}(t_1),\ldots,X^{(2)}(t_n))].
\end{equation}
By theorem~\ref{thm:lin} and the definition of $\widehat{P}$ given
in~\eqref{eq:def_widehatP} we have that, for $i=1,2$,
\begin{multline}
  \label{eq:EPi}
  E_{P^{(i)}}[f(X^{(i)}(t_1),\ldots,X^{(i)}(t_n))] =\\ E_{\widehat{P}^{(i)}}[\exp\{-M_T^{(i)}+\frac12[M^{i},M^{(i)}]_T\}f(X^{(i)}(t_1),\ldots,X^{(i)}(t_n))],
\end{multline}
where $M^{(i)}$is defined as in~\eqref{eq:defM}. We have proven in
proposition~\ref{prop:existlinear} and
theorem~\ref{thm:struniq} that the linear system~\eqref{linear} has a
unique strong solution. Thus it has uniqueness in law on
$\mathcal{C}([0,T], \R)^\N$ by
Yamada-Watanabe theorem, 
that is under the measures $\widehat{P}^{(i)}$, the processes
$X^{(i)}$ have the same laws. 
For a detailed proof of this theorem in infinite dimension see~\cite{1109.0363}.

Now we can also include $M^{(i)}$ in the system and conclude that
$(X^{(i)},M^{(i)})$ under $\widehat{P}^{(i)}$ have laws independent of $i=1,2$,
hence, through~\eqref{eq:EPi}, we have~\eqref{eq:thesisuniqueness}.
  \end{proof}

We can now conclude with the proof of Theorem~\ref{thm:existsweaksol}.

\begin{proof}[Proof of Theorem~\ref{thm:existsweaksol}]
  Let $(\Omega,\mathcal{F}_t,\widehat{P},B,X )$ be the solution of~\eqref{linear} in
  $L^\infty(\Omega \times [0,T],l^2)$ provided by
  theorem~\ref{thm:struniq}.
  We follow the same argument as in Section~\ref{sec:GirsanovTransform},
    only from $\widehat{P}$ to $P$. We construct $P$ as a measure on
    $(\Omega,\mathcal{F}_T)$ satisfying
    \[
      \frac{dP}{d\widehat{P}}\Big|_{\mathcal{F}_T}  = \exp(\widehat{M}_T-\frac12[\widehat{M},\widehat{M}]_T),
    \]
    where $\widehat{M}_t=\frac{1}{\sigma}\sum_{j\in
      J}\int_0^tX_{\overline{\jmath}}(s)dB_j(s)$.
    Under $P$ the processes 
    \[
      W_j(t)= B_j(t) - \int_0^t\frac1{\sigma}X_{\overline{\jmath}}(s)ds,
    \]
    are a sequence of independent Brownian motions. Hence $(\Omega,
    \mathcal{F}_t,P,W,X)$ is a solution of~\eqref{eq:dyad_tree_ito}
    and it is in $L^\infty$, since $P$ and $\widehat{P}$ are equivalent on
    $\mathcal{F}_T$.
\end{proof}





\bibliographystyle{abbrv}
\bibliography{bibdyadic}
\end{document}